\documentclass[reqno]{amsart}

\usepackage{amsmath,amssymb,amsthm,amsfonts}
\usepackage{mathrsfs}
\usepackage[colorlinks=true,linkcolor=blue,citecolor=blue]{hyperref}
\usepackage[all,cmtip]{xy} 


\numberwithin{equation}{section}
\allowdisplaybreaks


\newcommand{\cF}{\mathcal{F}}

\newcommand{\cO}{\mathcal{O}}

\newcommand{\bC}{\mathbb{C}}
\newcommand{\bF}{\mathbb{F}}
\newcommand{\bP}{\mathbb{P}}
\newcommand{\bQ}{\mathbb{Q}}
\newcommand{\bZ}{\mathbb{Z}}

\DeclareMathOperator{\Supp}{Supp}

\theoremstyle{plain}
\newtheorem{theorem}{Theorem}[section]
\newtheorem{lemma}[theorem]{Lemma}
\newtheorem{proposition}[theorem]{Proposition}
\newtheorem{corollary}[theorem]{Corollary}

\theoremstyle{definition}
\newtheorem{definition}[theorem]{Definition}

\theoremstyle{remark}
\newtheorem{remark}[theorem]{Remark}

\title{Noether-type inequalities for big divisors via control of the negative part}

\author{Shi Xu}

\address[Shi Xu]{Yau Mathematical Sciences Center, Tsinghua University, Beijing, 100084, China}

\email{shixumath@163.com}

\date{}

\subjclass[2020]{Primary 14C20; Secondary 14E05, 32S65}

\keywords{Zariski decomposition, Noether-type inequality, big divisors, linear systems, foliations, surface geometry}

\begin{document}

\begin{abstract}
Let $X$ be a smooth projective surface over $\bC$ and $D$ a big divisor with Zariski decomposition $D=P+N$.
We study the relationship between the volume $\mathrm{vol}(D)=P^2$ and the dimension $h^0(D)$.

We introduce a numerical invariant $\mathfrak{C}(N)$ depending only on the negative part $N$,
which provides a universal baseline control for $\mathrm{vol}(D)$.
This allows us to establish Noether-type inequalities relating $\mathrm{vol}(D)$ and $h^0(D)$,
where all correction terms are explicitly governed by $\mathfrak{C}(N)$.

Our results recover and unify several classical inequalities on surfaces,
and apply in particular to adjoint divisors and foliations.

We further obtain lower bounds for $\mathrm{vol}(D)$ in terms of the ps-index $\iota(D)$,
with applications to foliated surfaces.
\end{abstract}

\maketitle

\setcounter{tocdepth}{1}  
\tableofcontents

\section{Introduction}

Let $X$ be a smooth projective surface over $\bC$ and $D$ a big divisor.
Understanding the relationship between the volume $\mathrm{vol}(D)=P^2$
and the dimension $h^0(D)$ of the space of global sections of $\mathcal{O}_X(D)$
is a classical problem in algebraic geometry, dating back to
Noether inequalities for surfaces of general type (cf.~\cite{Noe70}).

\medskip
We are mainly interested in the case $h^0(D)\ge 2$, which ensures that the complete linear system $|D|$ defines a non-trivial rational map.

A fundamental difficulty arises from the discrepancy between the 
Zariski decomposition
\[
D=P+N
\]
and the linear system decomposition
\[
|D|=|M|+Z.
\]
The first encodes a numerical decomposition of $D$, 
while the second reflects the effective geometry of its linear system. 
Relating these two structures is therefore a central issue.
\medskip

In this paper, we introduce a numerical invariant $\mathfrak{C}(N)$, depending only on the negative part $N$, 
which provides a uniform control on this discrepancy. 
This allows us to establish Noether-type inequalities for arbitrary big divisors on smooth surfaces.

Now we state the main results.

\subsection{Main results}
Our main results are divided according to the dimension
of the image of the rational map $\phi_{|D|}$.
To simplify the statements, we fix the following notation and assumptions.

\medskip

\noindent\textbf{(A).}
Let $X$ be a smooth projective surface over $\mathbb{C}$, and let $D$ be a big divisor on $X$ with $h^0(D)\ge 2$.
We consider the following associated objects:

\begin{itemize}
\item The Zariski decomposition $D = P + N$, where $P$ is nef and $N$ is the negative part;

\item The decomposition of the linear system $|D| = |M| + Z$, where $M$ is the movable part and $Z$ is the fixed part;

\item The linear system $|D|$ induces the rational map $\phi_{|D|}=\phi_{|M|}: X \dashrightarrow \mathbb{P}^d$, where $d=h^0(D)-1$;

\item $\mathfrak{C}(N)$ denotes the invariant depending only on $N$ defined in Section~\ref{sec:N-nef}.
\end{itemize}
\medskip

\begin{theorem}\label{thm:main}
Throughout this theorem, we work under the assumptions in {\rm (A)}.

\begin{enumerate}
\item[(1)] If $\dim{\rm Im}\phi_{|D|}=1$, then  
\begin{equation}\label{ineq:P^2-intro-dim1}
\mathrm{vol}(D)
\ge \frac{(h^0(D)-1)^2}{h^0(D)-1+\mathfrak{C}(N)},
\end{equation}
with equality only if ${\rm Supp}(Z)={\rm Supp}(N)$ 
and $\phi_{|D|}$ induces a fibration $f:X\to \bP^1$ with connected fibres such that $D\cdot F=1$, where $F$ is a general fibre of $f$.

\item[(2)] If $\dim{\rm Im}\phi_{|D|}=2$, then 
\begin{equation}\label{ineq:P^2-intro-dim2-general-1st}
\mathrm{vol}(D) \ge h^0(D)-2,
\end{equation}
with equality if and only if $Z=N$ and $\phi_{|D|}$ is a birational morphism
onto  a normal rational surface $\Sigma\subset\bP^d$ of degree $d-1$ (see Proposition~\ref{prop:degS=n-1} for precise description).

Moreover, if the equality in \eqref{ineq:P^2-intro-dim2-general-1st} does not hold, then
\begin{equation}\label{ineq:P^2-intro-dim2-general-2nd}
\mathrm{vol}(D) \ge h^0(D)-2+\frac{1}{1+\mathfrak{C}(N)}.
\end{equation}

\item[(3)] If $\dim{\rm Im}\phi_{|D|}=2$ and $\kappa(X)\geq0$, then 
\begin{equation}\label{ineq:P^2-intro-dim2-not-rule-1st}
\mathrm{vol}(D) \ge 2h^0(D)-4,
\end{equation}
with equality if and only if $Z=N$ and either $\phi_{|D|}$ is a birational morphism onto a surface (birational to a K3 surface) of degree $2d-2$ of $\bP^d$,  
or $\phi_{|D|}$ is a finite morphism of degree $2$ onto a normal rational surface $\Sigma \subset \bP^d$ of degree $d-1$ (see Proposition~\ref{prop:degS=n-1} for a precise description of $\Sigma$).

Moreover, if the equality in \eqref{ineq:P^2-intro-dim2-not-rule-1st} does not hold, then

\begin{equation}\label{ineq:P^2-intro-dim2-not-rule-2nd}
\mathrm{vol}(D) \ge 2h^0(D)-4+\frac{1}{1+\mathfrak{C}(N)}.
\end{equation}
\end{enumerate}
\end{theorem}

\medskip

A conceptual feature of our approach is that the discrepancy between the
Zariski decomposition and the linear system decomposition can be separated
from the geometry of the movable linear system.

More precisely, in Section~\ref{sec:P+NvsM+Z} we establish comparison inequalities between the
positive part $P$ and the movable part $M$, where the discrepancy is uniformly
controlled by the invariant $\mathfrak{C}(N)$ depending only on the negative part $N$.
These comparison results are then combined with classical geometric properties
of the movable linear system $|M|$ and the induced rational map
$\phi_{|M|}$,
leading to the Noether-type inequalities proved in Section~\ref{sec:proof}.

\subsection{Applications: divisors with controlled negative part}
The effectiveness of the above inequalities depends on the invariant $\mathfrak{C}(N)$.
A key point is that in many geometric situations,
the negative part $N$ belongs to a restricted class,
so that $\mathfrak{C}(N)$ admits a uniform bound independent of the individual divisor $D$.
Consequently, the general inequalities obtained in the previous subsection
become effective uniform lower bounds for $\mathrm{vol}(D)$.

We now discuss several natural classes arising in surface theory
where such uniform bounds can be established.
\medskip

\paragraph{\bf Big and nef divisors.}
If $D$ is nef and big, then $\mathfrak{C}(N)=0$.
In this case, the baseline estimate becomes classical,
recovering the Noether inequality for minimal surfaces of general type
(see also \cite{Shin08}).

\medskip

\paragraph{\bf Adjoint divisors.}
Let $D=K_X+L$ with $L\equiv A+B$, where $A$ and $K_X+A$ are nef and
$B=\sum b_i C_i$ with $b_i\in[0,1]$.
Then $\mathfrak{C}(N)\le2$ (cf.~Proposition~\ref{prop:Zari-decom-of-K_X+L}).
Hence all divisors in this class admit a uniform baseline lower bound for $\mathrm{vol}(D)$.

This framework includes canonical divisor of log surfaces \cite{TZ92}
and of normal KSBA stable surfaces via resolution \cite{Che23},
recovering and unifying several known inequalities.

In \cite{Che23}, sharp examples are constructed showing that the bound $\mathfrak{C}(N)\le 2$ is optimal.
Our approach provides a conceptual explanation of these extremal examples via the invariant $\mathfrak{C}(N)$,
and yields an alternative proof of most inequalities in \cite[Theorem~1.1]{Che23}.

\medskip

\paragraph{\bf Hirzebruch--Jung chains.} 
If $\Supp(N)=\cup_{i=1}^r\Gamma_i$ is a Hirzebruch--Jung chain with $N\cdot\Gamma_1=-1$ and $N\cdot\Gamma_i=0$ for $i\ge2$, 
then $\mathfrak{C}(N)\le1$ (cf.~Proposition~\ref{prop:e_N(A)<=1}). 
This situation arises, for instance, in canonical divisors of relatively minimal foliations (cf.~\cite{Bru15,McQ08}).

In the general  foliation case,
the examples in \cite{LT24} show that the bound $\mathfrak{C}(N)\le 1$ is sharp,
and our framework provides an alternative proof of the corresponding inequalities
(cf.~\cite[Proposition~8.1(i)(ii) and Proposition~8.2(i)]{LT24}).

On the other hand, in the algebraically integrable case treated in \cite{LT24},
additional geometric structures impose further constraints on the linear system,
leading to sharper bounds that are not captured solely by $\mathfrak{C}(N)$.

\medskip

This shows that $\mathfrak{C}(N)$ controls the universal part of the lower bound,
while finer geometric properties of the linear system $|D|$
lead to sharper inequalities in special situations.

\subsection{Volume and ps-index of a big divisor}
For a big divisor $D$ on a smooth surface $X$, one does not necessarily have $h^0(D)\geq2$.
We therefore introduce the following invariant:
\[
\iota(D):=\min\{\,m\in\mathbb{Z}_{>0}\mid h^0(mD)\ge 2\,\}.
\]
We refer to $\iota(D)$ as the \emph{pluricanonical section index} (ps-index) of $D$.
The terminology is borrowed from \cite{CC14}, where the authors study the ps-index of canonical divisors on higher-dimensional algebraic varieties.

\begin{corollary}\label{coro:vol-ps-index}
Let $D$ be a big divisor on a smooth surface $X$, and let $\iota(D)$ be its ps-index.
Then
\[
\mathrm{vol}(D)
\;\ge\;
\frac{1}{\iota(D)^2}
\cdot
\frac{1}{1+\mathfrak{C}(N)\,\iota(D)}.
\]
\end{corollary}

\begin{remark}
This shows that the volume of $D$ is bounded from below in terms of its ps-index,
with an explicit dependence on the negative part $N$.
\end{remark}
\medskip

Finally, we consider the case where $D=K_{\cF}$, with $\cF$ a relatively minimal foliation of general type on a smooth surface $X$.
In this situation, one has $\mathfrak{C}(N)\leq 1$. 
Denote ${\rm vol}(\cF):={\rm vol}(K_{\cF})$ and $\iota(\cF):=\iota(K_{\cF})$.
Then the above corollary yields
\begin{equation}
{\rm vol}(\cF)\geq \frac{1}{\iota(\cF)^2}\cdot\frac{1}{1+\iota(\cF)}.
\end{equation}

This is closely related to a question of Cascini \cite{Cas21} on the boundedness of foliated varieties,
concerning the existence of uniform constants controlling both the birationality of pluricanonical systems
and the volume of $K_{\cF}$.

It was shown in \cite{Lu25} that such a uniform bound for birationality does not exist in general when $\mathrm{rank}\,\cF<\dim X$.
On the other hand, the existence of a uniform bound for the ps-index $\iota(\cF)$ remains open.

\medskip

\noindent\textbf{Question.}
Does there exist a constant $\iota_{n,r}>0$, depending only on $n=\dim X$ and $r={\rm rank}\,\cF$, where $r<n$, such that
\[
h^0(mK_{\cF})\geq 2 \quad \text{for some } m\le \iota_{n,r} \ ?
\]

\medskip

The paper is organized as follows.
In Section~\ref{sec:pre}, we collect some standard facts and introduce the basic notions
used throughout the paper.
In Section~\ref{sec:N-nef}, we introduce the invariant $\mathfrak{C}(N)$
and establish its basic properties.
In Section~\ref{sec:P+NvsM+Z}, we derive a fundamental identity relating
$P$ and $M$, together with uniform lower bounds.
Finally, in Section~\ref{sec:proof}, we combine these results with
the geometry of linear systems to prove the main theorems.

\bigskip

\noindent{\bf Notation.} Unless otherwise stated, all notations and conventions are standard in algebraic geometry. 
Linear equivalence is denoted by $\sim$, and numerical equivalence is denoted by $\equiv$.

\section{Preliminaries}\label{sec:pre}

Throughout the paper, let $X$ be a smooth projective surface over $\bC$,
and let $D$ be a divisor on $X$.

\begin{definition}
A divisor $D$ is called \emph{numerically effective} (or simply \emph{nef}) if $D\cdot C \ge 0$ for every curve $C$ on $X$.  
It is called \emph{pseudo-effective} if $D\cdot H \ge 0$ for every ample divisor $H$ on $X$.  
Finally, $D$ is called \emph{big} if there exists a constant $\alpha \in \bQ_{>0}$ such that 
$
h^0(X, nD) \ge \alpha n^2
$
for all sufficiently large integers $n$.
\end{definition}

Let $N$ be an effective $\bQ$-divisor on $X$.
We introduce the following terminology.

\begin{definition}
 A divisor $D$ is called \emph{$N$-nef} if $D\cdot \Gamma \ge 0$ for every irreducible component $\Gamma$ of $N$.  

 Let $D_1$ and $D_2$ be two $\bQ$-divisors on $X$.  
We say that $D_1$ and $D_2$ are \emph{numerically $N$-equivalent}, denoted by $D_1 \equiv_N D_2$,  
if $D_1\cdot \Gamma = D_2\cdot \Gamma$ for every irreducible component $\Gamma$ of $N$.
\end{definition}

\begin{theorem}[Zariski decomposition~{\cite{Zar62}}]
Let $D$ be a pseudo-effective divisor on $X$.
Then there exist unique $\bQ$-divisors $P$ and $N$ on $X$ such that 
\[ D = P + N, \]
satisfying the following conditions:
\begin{enumerate}
\item $N=0$ or the intersection matrix of the irreducible components of $N$ is negative definite;
\item $P$ is nef and $N$ is effective;  
\item $P\cdot \Gamma=0$ for each irreducible component $\Gamma$ of $N$. 
\end{enumerate}
We call $P$ (resp. $N$) the \emph{positive} (resp. \emph{negative}) part of $D$. 
\end{theorem}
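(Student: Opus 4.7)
\textbf{Uniqueness} follows from a standard negative-definiteness argument. Setting $E := P_1 - P_2 = N_2 - N_1$, intersect $E$ with each component of $N_1 + N_2$: the orthogonality condition (3) and nef-ness of the $P_i$ force $E \cdot \Gamma \leq 0$ for $\Gamma \subset N_1$ and $E \cdot \Gamma \geq 0$ for $\Gamma \subset N_2$. Computing $E \cdot P_i$ in two ways yields $P_1 \cdot P_2 \leq P_i^2$ for $i = 1, 2$, which combined with the Hodge index inequality $(P_1 \cdot P_2)^2 \geq P_1^2 \, P_2^2$ (perturbing by an ample divisor if $P_i^2 = 0$) forces $P_1^2 = P_2^2 = P_1 \cdot P_2$ and hence $E^2 = 0$. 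The negative-definiteness condition (1) then upgrades this numerical vanishing to $E = 0$ as actual divisors.

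For \textbf{existence}, the core algebraic input is that if $\Gamma_1, \dots, \Gamma_n$ are irreducible curves with negative definite intersection matrix $M = (\Gamma_i \cdot \Gamma_j)$, then $-M$ is a Stieltjes matrix (its off-diagonal entries $-\Gamma_i \cdot \Gamma_j \leq 0$, since distinct irreducible curves meet non-negatively), and the inverse of such a matrix is entrywise non-negative. Hence the unique rational solution $(a_i)$ of $(D - \sum a_j \Gamma_j) \cdot \Gamma_i = 0$ satisfies $a_i \geq 0$ whenever every $D \cdot \Gamma_i \leq 0$. I would also record that any irreducible $C$ with $D \cdot C < 0$ has $C^2 < 0$: otherwise $C$ would be nef (since $C \cdot C' \geq 0$ for every other irreducible $C'$), forcing $D \cdot C \geq 0$ by pseudo-effectivity.

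Then construct $N$ iteratively. Starting from $S = \emptyset$ and $P = D$, while $P$ is not nef, choose an irreducible curve $C$ with $P \cdot C < 0$, adjoin $C$ to $S$, resolve the enlarged linear system $(D - \sum_{\Gamma \in S} a_\Gamma \Gamma) \cdot \Gamma' = 0$ for every $\Gamma' \in S$, and update $P := D - \sum_{\Gamma \in S} a_\Gamma \Gamma$. Termination is automatic because the Picard number $\rho(X)$ bounds $|S|$ via negative-definiteness.

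The \textbf{main obstacle} is proving that this iteration is well-posed: after appending a new curve $C$, the enlarged intersection matrix on $S$ must remain negative definite and the updated coefficients $a_\Gamma$ must remain non-negative. Both properties follow from combining the Stieltjes inverse lemma with the strict inequality $P_{\text{old}} \cdot C < 0$, which propagates the correct sign pattern through the enlarged system at each step. Once the loop terminates, $P = D - N$ is nef by construction, (3) holds by the defining equations for $N$, and (1) is maintained throughout, giving the desired decomposition.
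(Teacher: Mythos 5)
The paper itself offers no proof of this theorem: it is quoted from Zariski \cite{Zar62}, with the supporting matrix facts imported from \cite{Luc01}. So your proposal has to stand on its own. The uniqueness half is essentially sound, though the final step deserves care: $E=N_2-N_1$ is supported on $\mathrm{Supp}(N_1)\cup\mathrm{Supp}(N_2)$, which is not a priori negative definite, so "condition (1) upgrades $E^2=0$ to $E=0$" should be replaced by the standard decomposition $E=A-B$ with $A,B\ge 0$ sharing no component: then $A\cdot B\ge 0$ and each of $A^2,B^2\le 0$, so $E^2=0$ forces $A^2=B^2=0$ and hence $A=B=0$ by negative definiteness of each support separately.

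The existence half has a genuine gap precisely at the point you label the main obstacle. Of your two well-posedness claims, only one follows as stated: non-negativity of the updated coefficients is indeed a consequence of the Stieltjes lemma applied to $N_{\mathrm{new}}-N_{\mathrm{old}}$, which meets every old curve in $0$ and the new curve $C$ in $P_{\mathrm{old}}\cdot C<0$. But negative definiteness of the enlarged configuration does \emph{not} follow from "the Stieltjes inverse lemma combined with $P_{\mathrm{old}}\cdot C<0$": the Stieltjes lemma presupposes negative definiteness rather than producing it, and those inputs do not exclude, say, two $(-1)$-curves meeting in two points, whose intersection matrix is indefinite even though each curve has negative self-intersection. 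The standard way to close this step is a Perron--Frobenius argument: if the enlarged matrix were not negative definite, the top eigenvector on some connected component would yield a nonzero \emph{nef} effective class $Z$ supported on the configuration with $P_{\mathrm{old}}\cdot Z<0$, contradicting pseudo-effectivity of $P_{\mathrm{old}}$. That in turn requires knowing $P_{\mathrm{old}}$ is still pseudo-effective at every intermediate stage (for effective $D$ one shows $N_{\mathrm{old}}\le D$ by a further application of the Stieltjes lemma; for merely pseudo-effective $D$ one reduces to the effective case or passes to limits) --- a point your sketch never addresses, and which also undercuts your argument that the newly adjoined curve satisfies $C^2<0$: that argument uses pseudo-effectivity of $D$, but the curve is only known to satisfy $P_{\mathrm{old}}\cdot C<0$, not $D\cdot C<0$.
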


For a pseudo-effective divisor $D$, we define its \emph{volume} by 
\[
{\rm vol}(D):=\limsup_{n\to+\infty}\frac{h^0(X,nD)}{n^2/2}.
\]
It is well known that
\[
{\rm vol}(D)=P^2.
\]
In particular, if $D$ is big, then ${\rm vol}(D)>0$.

\begin{lemma}\label{lem:neg-def}
Suppose $D=\sum_{i=1}^n a_i C_i$ is a $\bQ$-divisor such that the intersection matrix 
$(C_i\cdot C_j)_{1\leq i,j\leq n}$ is negative definite.
\begin{enumerate}
\item If $D\cdot C_i \leq 0$ for all $i=1,\dots,n$, then $D \geq 0$.
\item If $E$ is an effective $\bQ$-divisor and $(E-D)\cdot C_j \leq 0$ for all $j=1,\dots,n$, then $E-D \geq 0$.
\end{enumerate}
\end{lemma}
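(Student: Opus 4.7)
The plan is to establish (1) directly from negative-definiteness and then derive (2) from (1) by a short bookkeeping argument.

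For (1), I would decompose $D$ into its positive and negative parts: write $D = D^+ - D^-$, where $D^+ := \sum_{a_i>0} a_i C_i$ and $D^- := \sum_{a_i<0}(-a_i)\,C_i$ are effective $\bQ$-divisors supported on disjoint subsets of $\{C_1,\dots,C_n\}$. The goal is to show $D^-=0$. The key computation is
\[
(D^-)^2 \;=\; D^-\!\cdot(D^+-D) \;=\; D^-\!\cdot D^+ \;-\; D^-\!\cdot D.
\]
The first summand is non-negative because $D^-$ and $D^+$ share no components and distinct irreducible curves on $X$ satisfy $C_iC_j\geq 0$. The second summand equals $\sum_{a_i<0}(-a_i)(-DC_i)$, a sum of non-negative terms by hypothesis. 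Hence $(D^-)^2\geq 0$. On the other hand, the intersection matrix of $\{C_1,\dots,C_n\}$ is negative definite, so any nonzero rational combination of the $C_i$ has strictly negative self-intersection. This forces $D^-=0$, i.e.\ $D\geq 0$.

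For (2), I would split $E = E_0 + E_1$ according to whether the prime components lie among $\{C_i\}$: set $E_0 := \sum_i b_i C_i$ with $b_i\geq 0$, and let $E_1$ be the effective remainder, whose support contains no $C_i$. Then $E-D = (E_0-D)+E_1$, so it suffices to show $E_0-D\geq 0$. For each $j$, since $C_j$ is not a component of $E_1$, one has $E_1\cdot C_j\geq 0$, and therefore
\[
(E_0-D)\cdot C_j \;=\; (E-D)\cdot C_j \;-\; E_1\cdot C_j \;\leq\; 0.
\]
Applying (1) to the $\bQ$-divisor $E_0-D$, which is supported on $\{C_1,\dots,C_n\}$, yields $E_0-D\geq 0$, and hence $E-D\geq 0$.

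The main obstacle is essentially sign bookkeeping: making sure in part (1) that all three contributions line up as non-negative, and in part (2) that the separation of $E$ into components inside and outside the support of $D$ is handled cleanly so that the reduction to (1) is legitimate. No deeper input is required beyond negative-definiteness of $(C_iC_j)$ and the elementary fact that distinct irreducible curves intersect non-negatively.
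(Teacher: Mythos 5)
Your proof is correct. The paper does not actually write out an argument here --- it simply cites \cite[Lemma~14.9, Lemma~14.15]{Luc01} --- and what you have written is precisely the standard proof of those lemmas: for (1), the decomposition $D=D^{+}-D^{-}$ and the computation $(D^{-})^{2}=D^{-}\!\cdot D^{+}-D^{-}\!\cdot D\geq 0$ forcing $D^{-}=0$ by negative definiteness; for (2), the clean reduction to (1) after splitting off the part of $E$ not supported on the $C_i$. All the sign bookkeeping checks out, so nothing is missing.
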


\begin{proof}
The statements follow from {\cite[Lemma~14.9, Lemma~14.15]{Bad01}}.
\end{proof}

\begin{lemma}\label{lem:degD>=h0(D)-1-curve}
Let $Y$ be a smooth projective curve of genus $g(Y)$, and let $D$ be an effective divisor on $Y$.
\begin{enumerate}
\item If $\deg D > 2g(Y) - 2$, then 
\[
h^0(D) = \deg D - g(Y) + 1.
\]
\item \emph{(Clifford)} If $0 < \deg D \leq 2g(Y) - 2$, then 
\[
h^0(D) \leq \tfrac{1}{2}\deg D + 1.
\]
\end{enumerate}
Consequently, one always has
\[
\deg D \geq h^0(D) - 1.
\]
If $\deg D \geq 1$, then equality $\deg D = h^0(D) - 1$ holds if and only if $Y \cong \bP^1$.
\end{lemma}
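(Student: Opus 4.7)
The plan is to split into the two regimes dictated by the statement and then read off the final comparison between $\deg D$ and $h^0(D)-1$.

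For part (1), I would apply Riemann--Roch: $h^0(D)-h^1(D)=\deg D-g(Y)+1$. Since $\deg(K_Y-D)=2g(Y)-2-\deg D<0$, we get $h^1(D)=h^0(K_Y-D)=0$, which yields the claimed equality. For part (2), this is Clifford's theorem for effective (equivalently, special) divisors on a smooth curve; I would simply cite it, since a self-contained proof would be a digression from the surface-theoretic aims of the paper.

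For the final inequality $\deg D\ge h^0(D)-1$, I would check the trivial case $D=0$ separately ($h^0(D)=1$ and $\deg D=0$). If $\deg D>2g(Y)-2$, part~(1) gives $\deg D-(h^0(D)-1)=g(Y)\ge 0$. If $0<\deg D\le 2g(Y)-2$, Clifford gives $h^0(D)-1\le \tfrac12\deg D\le\deg D$ since $\deg D\ge 0$. So the inequality holds in every case.

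The equality analysis under the assumption $\deg D\ge 1$ then comes essentially for free. In the Clifford regime, equality $\deg D=h^0(D)-1$ would force $h^0(D)-1\le\tfrac12(h^0(D)-1)$, i.e.\ $h^0(D)\le 1$, hence $\deg D\le 0$, contradicting $\deg D\ge 1$. So equality must occur in the regime of part~(1), which forces $g(Y)=0$, i.e.\ $Y\cong\mathbb{P}^1$. Conversely, on $\mathbb{P}^1$ every effective divisor $D$ satisfies $h^0(D)=\deg D+1$ by direct computation (or again by Riemann--Roch with $g=0$), giving the converse.

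No step here is genuinely an obstacle; the only mildly delicate point is making sure the Clifford regime really rules out equality when $\deg D\ge 1$, which is the small arithmetic check above. The rest is a straightforward bookkeeping application of Riemann--Roch and Clifford's theorem.
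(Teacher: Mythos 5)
Your proof is correct and follows essentially the same route as the paper, which simply cites Hartshorne (Riemann--Roch and Clifford's theorem) and leaves the bookkeeping for the final inequality and the equality analysis implicit; you have just written that bookkeeping out. One tiny caveat: ``effective'' is not equivalent to ``special'' even when $0<\deg D\le 2g-2$, but in the non-special case Riemann--Roch gives $h^0(D)=\deg D-g+1\le\tfrac12\deg D+1$ directly, so the stated bound and your subsequent deductions still hold.
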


\begin{proof}
See \cite[Ch.~IV, Thm.~1.3 and Thm.~5.4]{Har77}.
\end{proof}

\begin{proposition}[{\cite[Lem.~1.4]{Bea79}}]\label{Prop:Bea79Lem1.4}
Let $\Sigma$ be an irreducible non-degenerate surface in $\bP^d$. 
\begin{enumerate}
\item $\deg \Sigma \geq d - 1$.
\item If moreover $\kappa(\Sigma)\geq0$, then $\deg \Sigma \geq 2d - 2$.
\end{enumerate}
\end{proposition}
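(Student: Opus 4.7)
The plan is to reduce both inequalities to the one-dimensional statement of Lemma~\ref{lem:degD>=h0(D)-1-curve} by slicing $X$ with a general hyperplane. Let $\pi:\tilde X\to X$ be a resolution of singularities and set $\tilde H:=\pi^*\cO_X(1)$; then $\tilde H$ is big and nef with $\tilde H^2=\deg X$, and $\pi^*|\cO_X(1)|\subset|\tilde H|$ is a base-point-free linear subsystem. By Bertini, a general $\tilde C\in\pi^*|\cO_X(1)|$ is smooth and irreducible and projects isomorphically onto a smooth non-degenerate irreducible hyperplane section $C\subset X$ in a $\bP^{d-1}\subset\bP^d$, of degree $\deg X$.

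For part~(1), non-degeneracy of $X$ gives $h^0(X,\cO_X(1))\geq d+1$. Combining this with $h^0(\cO_X)=1$ in the short exact sequence
\[
0\to\cO_X\to\cO_X(1)\to\cO_C(1)\to0
\]
yields $h^0(C,\cO_C(1))\geq d$. Applying Lemma~\ref{lem:degD>=h0(D)-1-curve} to $\cO_C(1)$ then gives $\deg X=\deg\cO_C(1)\geq h^0(\cO_C(1))-1\geq d-1$.

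For part~(2), if $X$ is not birationally ruled, then $\kappa(\tilde X)\geq 0$, so $K_{\tilde X}$ is pseudo-effective; together with the nefness of $\tilde H$, this yields $K_{\tilde X}\cdot\tilde H\geq 0$. Adjunction on $\tilde X$ reads
\[
2g(C)-2=(K_{\tilde X}+\tilde H)\cdot\tilde H=K_{\tilde X}\cdot\tilde H+\deg X,
\]
so $0<\deg X\leq 2g(C)-2$. Then Lemma~\ref{lem:degD>=h0(D)-1-curve}(2) (Clifford) applied to the effective divisor $\cO_C(1)$ gives $d\leq h^0(\cO_C(1))\leq\tfrac12\deg X+1$, whence $\deg X\geq 2d-2$.

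The main obstacle is the routine bookkeeping around the singularities of $X$: one must verify that $\tilde H^2=\deg X$, that $\pi$ identifies $\tilde C$ with a smooth hyperplane section of $X$ so that the geometric genera agree, and that Bertini applies upstairs to yield smoothness and irreducibility of the general section. Once these standard reductions are in place, the heart of the argument is a short combination of adjunction, pseudo-effectivity of $K_{\tilde X}$ for non-ruled surfaces, and Clifford's inequality already recorded in Lemma~\ref{lem:degD>=h0(D)-1-curve}.
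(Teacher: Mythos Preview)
The paper does not supply its own proof of this proposition; it is simply quoted from \cite[Lem.~1.4]{Bea79}. Your argument is essentially Beauville's original one---slice by a general hyperplane and combine Riemann--Roch/Clifford on the resulting curve with adjunction---and it is correct in substance.

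One point of bookkeeping deserves tightening. You assert that a general $\tilde C\in\pi^*|\cO_X(1)|$ ``projects isomorphically onto a smooth'' hyperplane section $C\subset X$. This can fail: if $X$ has a one-dimensional singular locus, a general hyperplane section $C$ meets it and need not be smooth, so $\pi|_{\tilde C}:\tilde C\to C$ is only the normalization. This is harmless, because nothing in your argument actually requires working on $X$. For part~(1), use the exact sequence on $\tilde X$,
\[
0\to\cO_{\tilde X}\to\cO_{\tilde X}(\tilde H)\to\cO_{\tilde C}(\tilde H|_{\tilde C})\to0,
\]
together with $h^0(\tilde X,\tilde H)\ge h^0(X,\cO_X(1))\ge d+1$ and $h^0(\cO_{\tilde X})=1$, to get $h^0(\tilde C,\tilde H|_{\tilde C})\ge d$; then Lemma~\ref{lem:degD>=h0(D)-1-curve} applied to the smooth curve $\tilde C$ gives $\deg X=\tilde H^2=\deg(\tilde H|_{\tilde C})\ge d-1$. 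For part~(2), your adjunction computation is already on $\tilde X$; just write $g(\tilde C)$ rather than $g(C)$ throughout, and the rest goes through verbatim. (Your use of $K_{\tilde X}\cdot\tilde H\ge0$ is fine: $\kappa(\tilde X)\ge0$ gives an effective multiple of $K_{\tilde X}$, and $\tilde H$ is nef.)
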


The following proposition describes all non-degenerate surfaces
of minimal degree.

\begin{proposition}\label{prop:degS=n-1}
Suppose $\Sigma \subset \bP^d$ is a non-degenerate surface of degree $d-1$.
Let $\rho:Y\to \Sigma$ be the minimal resolution of singularities of $\Sigma$, 
and let $M_0=\rho^*(H)$ be the pullback of a hyperplane section $H$ of $\Sigma$.

Then the triple $(Y,\Sigma,M_0)$ belongs to one of the following cases:
\smallskip

\begin{enumerate}
\item $d=2$, $Y=\Sigma=\bP^2$, and $M_0=L$, where $L$ is a general line in $\bP^2$.

\item $d=5$, $Y=\Sigma=\bP^2$, and $M_0=2L$, where $L$ is a general line in $\bP^2$.

\item $d\geq3$, $Y=\Sigma=\bF_e$ with $0\leq d-e-3\equiv0\pmod{2}$, and 
\[
M_0=C_e+\tfrac{1}{2}(d+e-1)F.
\]

\item[\rm(4)] $d\geq3$, $\Sigma$ is a cone over a rational curve of degree $d-1$ in $\bP^{d-1}$, 
$Y=\bF_e$, and 
\[
M_0=C_e+(d-1)F.
\]
\end{enumerate}

Here $\bF_e:=\bP_{\bP^1}\!\left(\cO_{\bP^1}\oplus\cO_{\bP^1}(e)\right)$ is the Hirzebruch surface of degree $e$, 
$F$ is a fibre of the $\bP^1$-fibration $\pi:\bF_e\to\bP^1$, and $C_e$ is a minimal section with $C_e^2=-e$.
\smallskip

In particular, $\Sigma$ is a normal rational surface.
\end{proposition}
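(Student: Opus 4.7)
The plan is to reduce to the classical Del Pezzo--Bertini classification of non-degenerate surfaces of minimal degree in $\bP^d$ and then identify the triple $(Y,\Sigma,M_0)$ in each of the four cases.

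First I would cut $\Sigma$ by a general hyperplane to obtain an irreducible non-degenerate curve $C\subset\bP^{d-1}$ of degree $d-1$. The one-dimensional version of Proposition~\ref{Prop:Bea79Lem1.4}---a non-degenerate irreducible curve in $\bP^n$ has degree at least $n$, with equality only for the rational normal curve---forces $C$ to be a rational normal curve. In particular the general hyperplane section of $\Sigma$ is smooth rational, so $\Sigma$ has only rational singularities and the minimal resolution $Y$ is a smooth rational surface.

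Next I would split into two subcases. \emph{If $\Sigma$ is smooth}, then $Y=\Sigma$, and adjunction on the smooth rational hyperplane section $H$ gives $K_\Sigma\cdot H=-d-1$. By the classical Del Pezzo classification of smooth surfaces of minimal degree, $\Sigma$ is either $\bP^2$ or a Hirzebruch surface $\bF_e$ (minimality follows because contracting any $(-1)$-curve would produce a surface of lower-than-minimal degree in $\bP^d$). When $\Sigma=\bP^2$, writing $M_0\sim kL$ and solving $k^2=d-1$ together with $3k=d+1$ yields $(k,d)=(1,2)$ and $(2,5)$, i.e.\ cases (1) and (2). When $\Sigma=\bF_e$, writing $M_0=aC_e+bF$ and imposing $M_0^2=d-1$, $K_{\bF_e}\cdot M_0=-d-1$ forces $a=1$ and $b=(d+e-1)/2$; the integrality and nefness of $M_0$ produce the parity $d-e-3\equiv 0\pmod 2$ of case (3).

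\emph{If $\Sigma$ is singular}, I would argue it must be a cone. Pick a singular point $p\in\Sigma$ and project linearly from $p$. A general line through $p$ meets $\Sigma$ with multiplicity at least $2$ at $p$ and hence in at most $d-3$ residual points, so either the image has degree at most $d-3<(d-1)-1$ (contradicting the minimal-degree bound in $\bP^{d-1}$), or the projection fails to be generically finite, which is to say $\Sigma$ is a cone with apex $p$. Thus $\Sigma$ is a cone over an irreducible non-degenerate curve of degree $d-1$ in $\bP^{d-1}$, which must be a rational normal curve. Its minimal resolution is $Y=\bF_{d-1}$ with the negative section $C_{d-1}$ contracted to the apex; the relations $M_0\cdot C_{d-1}=0$ and $M_0\cdot F=1$ then uniquely force $M_0=C_{d-1}+(d-1)F$, giving case (4).

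The main obstacle is the singular case: the projection argument is delicate because one must rule out the situation where the image of the projection from $p$ lies in a proper linear subspace of $\bP^{d-1}$, in which case the minimal-degree bound does not apply directly. A clean workaround is to iterate the projection until the image becomes non-degenerate, or alternatively to invoke Del Pezzo's theorem that singular minimal-degree surfaces are precisely cones. The remaining work---pulling the hyperplane system back to the Hirzebruch resolution and reading off the coefficients---is routine intersection theory on $\bF_e$.
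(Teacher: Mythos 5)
The paper does not actually prove this proposition; it simply cites Nagata's classification of surfaces of minimal degree (\cite[Theorem~7]{Nag60}) and \cite[Lemmas~2.2--2.3]{TZ92}. Your sketch reconstructs the underlying classical Del Pezzo--Bertini argument, and its overall structure (rational normal hyperplane section, smooth case via adjunction, singular case via projection from a singular point yielding a cone) is sound and is essentially what the cited sources contain. Two small points are worth tightening. First, on $\bF_e$ the two conditions $M_0^2=d-1$ and $K_{\bF_e}\cdot M_0=-d-1$ do \emph{not} by themselves force $a=1$: writing $M_0=aC_e+bF$, the resulting quadratic $2a^2-(d+1)a+(d-1)=0$ has roots $a=1$ and $a=(d-1)/2$, and you need an extra step (e.g.\ $M_0\cdot C_e\geq 1$ for $e\geq 1$, and the ruling-swap symmetry of $\bF_0$) to discard the second root. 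Second, the ``delicate'' issue you flag in the singular case is not actually an obstacle: if the projection of $\Sigma$ from $p\in\Sigma$ were contained in a hyperplane $H'\subset\bP^{d-1}$, then $\Sigma$ would lie in the hyperplane of $\bP^d$ spanned by $p$ and $H'$, contradicting non-degeneracy; so the image is automatically non-degenerate and the degree count $\deg\pi_p(\Sigma)\leq d-3<d-2$ applies directly, forcing $\Sigma$ to be a cone. With these two details filled in, your argument is a complete and self-contained proof, arguably more informative than the paper's bare citation.
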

\begin{proof}
See \cite[Theorem~7]{Nag60} or \cite[Lemma~2.2 and~2.3]{TZ92}.
\end{proof}

\begin{proposition}[{\cite[Rem.~1.5]{Bea79}}]\label{prop:degS=2d-2}
Suppose $\Sigma \subset \bP^d$ is a non-degenerate surface of degree $2d-2$.
Then $\Sigma$ is birational to a K3 surface.
\end{proposition}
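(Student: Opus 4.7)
The approach is classical: pull back a hyperplane section to a resolution of singularities, apply Castelnuovo's genus bound to a general element of the resulting linear system, and use adjunction together with the Enriques--Kodaira classification to identify the minimal model. First I would choose a resolution $\rho: X \to \Sigma$ and set $L := \rho^*H$, where $H$ is a hyperplane section of $\Sigma$. Then $L$ is big and nef with $L^2 = 2d-2$ and $h^0(X,L) \geq d+1$, the latter because the $d+1$ linear forms on $\bP^d$ restrict to linearly independent sections of $\cO_\Sigma(1)$. By Bertini, a general member $C \in |L|$ is a smooth irreducible curve and $\phi_{|L|}(C)$ is a non-degenerate irreducible curve of degree $2d-2$ in a hyperplane $\bP^{d-1}$.

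Castelnuovo's bound then gives $g(C) \leq d$, with equality iff the image is canonically embedded. The adjunction formula $2g(C) - 2 = L^2 + K_X \cdot L$ yields
\[
K_X \cdot L \;=\; 2g(C) - 2d \;\leq\; 0.
\]
Invoking the (implicit) hypothesis that $\Sigma$ is not ruled, $K_X$ is pseudoeffective, and since $L$ is nef we must then have $K_X \cdot L \geq 0$. Hence $K_X \cdot L = 0$, $g(C) = d$, and the hyperplane section is canonically embedded. By the Hodge index theorem, $K_X^2 \leq 0$. A Zariski-decomposition argument shows that every $(-1)$-curve $E$ on $X$ is a component of the negative part $N_{K_X}$ of $K_X$ (otherwise $K_X \cdot E \geq 0$, contradicting $K_X \cdot E = -1$), and then the identity $L \cdot N_{K_X} = 0$ forces $L \cdot E = 0$. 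Contracting such $(-1)$-curves preserves the bigness and nefness of $L$ as well as the orthogonality $K \cdot L = 0$; running the MMP to a minimal model $X_0$ we obtain $K_{X_0}$ nef, $L_0$ big and nef, and $K_{X_0} \cdot L_0 = 0$, which combined with Hodge index forces $K_{X_0}^2 = 0$ and then $K_{X_0} \equiv 0$.

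Among the minimal surfaces with $K \equiv 0$ --- K3, Enriques, abelian, and bielliptic --- the Euler characteristic $\chi(\cO)$ takes the values $2,1,0,0$ respectively. Since $L_0$ is big and nef and $K_{X_0} \equiv 0$, Kawamata--Viehweg vanishing gives $h^i(L_0) = 0$ for $i \geq 1$, so Riemann--Roch yields
\[
h^0(L_0) \;=\; \chi(\cO_{X_0}) + \tfrac{1}{2} L_0^2 \;=\; \chi(\cO_{X_0}) + (d-1).
\]
Since $h^0(L_0) = h^0(X, L) \geq d+1$, we conclude $\chi(\cO_{X_0}) \geq 2$, which singles out the K3 case. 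Hence $\Sigma$ is birational to the K3 surface $X_0$.

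The most delicate step is verifying the inputs to Castelnuovo's bound, namely that a general hyperplane section of $\Sigma$ is irreducible, non-degenerate in $\bP^{d-1}$, and maps birationally onto its image; this follows from Bertini combined with the irreducibility and non-degeneracy of $\Sigma \subset \bP^d$, but requires some care if $\Sigma$ has non-rational singularities. A secondary subtle point is the implicit non-ruled hypothesis on $\Sigma$: without it the conclusion fails, since ruled surfaces admit non-degenerate embeddings of degree $2d-2$ in $\bP^d$.
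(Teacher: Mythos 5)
Your argument is correct and is essentially the classical proof that the paper outsources entirely to the citation \cite[Rem.~1.5]{Bea79}: bound the genus of a general hyperplane section by Castelnuovo (for degree $2d-2$ in $\bP^{d-1}$ the bound is exactly $d$), combine with adjunction and pseudo-effectivity of $K_X$ to get $K_X\cdot L=0$ and hence $K_{X_0}\equiv 0$ on the minimal model, and then use $h^0(L_0)=\chi(\cO_{X_0})+\tfrac12 L_0^2\ge d+1$ to rule out the Enriques, abelian and bielliptic cases. Your closing caveat is right and worth stating explicitly: as printed, the proposition silently assumes $\Sigma$ is not ruled (this is how Beauville states it, and it is the only situation in which the paper invokes it, namely under $\kappa(X)\ge 0$ in Proposition~\ref{prop:P2>=2h0(D)-2-surface}); without that hypothesis the conclusion fails. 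The remaining delicate points you flag (irreducibility, non-degeneracy, and birationality onto the image of the general hyperplane section) are all handled by standard Bertini-type statements exactly as you indicate, so there is no gap.
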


\section{Divisors nef along a negative definite cycle}\label{sec:N-nef}
The purpose of this section is to introduce a numerical invariant 
associated to a negative-definite effective $\bQ$-divisor $N$, which provides a 
uniform control on the discrepancy between the Zariski decomposition 
and the linear system decomposition (see Section~\ref{sec:P+NvsM+Z}).
\medskip

Let $N=\sum_{i=1}^r \gamma_i \Gamma_i$ be an effective $\bQ$-divisor such that the intersection matrix $(\Gamma_i \cdot \Gamma_j)$ is negative definite.

Let $A$ be an $N$-nef divisor, i.e., $A \cdot \Gamma_i \ge 0$ for all $i$.

Associated to $A$, we define a divisor supported on $\Supp(N)$ as follow:
\begin{equation}\label{equ:def-E_N(A)}
E_N(A)=\sum_i b_i \Gamma_i
\end{equation}
 is the unique $\bQ$-divisor supported on $\Supp(N)$ such that
\[
E_N(A)\cdot \Gamma_i = - A \cdot \Gamma_i \quad \text{for all } i.
\]
By Lemma~\ref{lem:neg-def}, $E_N(A)$ is effective.

Intuitively, $E_N(A)$ is the correction divisor supported on $N$
which makes
\[
A^*:=A+E_N(A)
\]
numerically orthogonal to $N$.

\medskip

We now introduce the following numerical invariant.

\begin{definition}\label{def:lambda_N(A)}
We define
\[
\lambda_N(A) := \min \{ x \ge 0 \mid (xE_N(A) - N)\cdot A = 0 \}.
\]
\end{definition}

If $A\cdot N=0$, then $\lambda_N(A)=0$. 
If $A\cdot N>0$, then necessarily $A\cdot E_N(A)>0$, and
\[
\lambda_N(A)=\frac{A\cdot N}{A\cdot E_N(A)}.
\]

\medskip

The following lemma shows that $\lambda_N(A)$ is uniformly bounded 
in terms of the numerical data of $N$.

\begin{lemma}\label{lem:e_N_bound}
We have
\[
\lambda_N(A) \le \max_i \{ \gamma_i (-\Gamma_i^2) \}.
\]
\end{lemma}

\begin{proof}
Set 
\[
\lambda_0 := \max_i \{ \gamma_i (-\Gamma_i^2) \}.
\]
By definition of $\lambda_N(A)$, it suffices to show
\[
(\lambda_0 E_N(A) - N)\cdot A \ge 0.
\]

If $A \cdot N = 0$, then $E_N(A)=0$ and there is nothing to prove.  
Thus we may assume $A \cdot N > 0$.

For each $i$, we have
\[
\left( E_N(A) - \sum_j \frac{A \cdot \Gamma_j}{-\Gamma_j^2}\Gamma_j \right)\cdot \Gamma_i
=-A\cdot \Gamma_i - \sum_j \frac{A \cdot \Gamma_j}{-\Gamma_j^2}\Gamma_j\cdot \Gamma_i
=- \sum_{j\neq i} \frac{A \cdot \Gamma_j}{-\Gamma_j^2}\Gamma_j\cdot \Gamma_i
\le 0.
\]
By Lemma~\ref{lem:neg-def}, it follows that
\begin{equation}\label{equ:E_N(A)>=}
E_N(A) \ge \sum_i \frac{A \cdot \Gamma_i}{-\Gamma_i^2}\Gamma_i.
\end{equation}
Multiplying by $\lambda_0$, we obtain
\[
\lambda_0 E_N(A) 
\ge \sum_i \lambda_0 \frac{A \cdot \Gamma_i}{-\Gamma_i^2}\Gamma_i
\ge \sum_i (A \cdot \Gamma_i)\gamma_i \Gamma_i,
\]
where the last inequality follows from the definition of $\lambda_0$.

Note that $A$ is an $N$-nef divisor. Intersecting with $A$, we get
\[
\lambda_0 E_N(A)\cdot A\geq\sum_i\gamma_i(A\cdot\Gamma_i)^2\geq \sum_i\gamma_i A\cdot\Gamma_i=N\cdot A,
\]
which completes the proof.
\end{proof}
\medskip

The above lemma allows us to introduce a global invariant associated to $N$.

\begin{definition}\label{def:c(N)}
We define
\[
\mathfrak{C}(N):=\sup\{\lambda_N(A)\mid A \text{ is $N$-nef}\}.
\]
\end{definition}

By Lemma~\ref{lem:e_N_bound}, the invariant $\mathfrak{C}(N)$ is bounded 
and depends only on the numerical data of $N$.
This invariant provides a uniform upper bound for  
$\lambda_N(A)$ as $A$ varies among $N$-nef divisors.
\medskip

The following consequence is immediate from the definition.
\begin{lemma}\label{lem:lambda-scaling}
Let $N$ be a negative-definite effective $\bQ$-divisor and $A$ is an $N$-nef divisor.
\begin{enumerate}
\item If $A\equiv_N nF$ for a divisor $F$ and some $n\in\bZ_{>0}$, then 
\[
\lambda_N(A)\leq \frac{\mathfrak{C}(N)}{n}.
\]
\item If $x\in\bQ_{>0}$, then $\lambda_{xN}(A)=x\,\lambda_N(A)$. Moreover, $\mathfrak{C}(xN)=x\,\mathfrak{C}(N)$.
\end{enumerate}

\end{lemma}

\bigskip

We now estimate $\mathfrak{C}(N)$ in two classes of examples
arising naturally in surface theory.

\subsection{The case of adjoint divisors}

Let $D=K_X+L$ be an adjoint divisor, where 
\[
L \equiv A+B,
\]
with $A$ a nef $\bQ$-divisor such that $K_X+A$ is nef, and 
$B=\sum_i b_i C_i$ an effective $\bQ$-divisor with $b_i\in[0,1]$.

Since $K_X+L \equiv (K_X+A)+B$
with $K_X+A$ nef and $B\ge 0$, the divisor $K_X+L$ is pseudo-effective, hence admits a Zariski decomposition.

\begin{proposition}\label{prop:Zari-decom-of-K_X+L}
Under the above assumptions, let
\[
K_X+L = P+N
\]
be the Zariski decomposition. Then the negative part $N$ satisfies
\[
N=\sum_i \beta_i C_i,\qquad \beta_i\in[0,b_i]\cap\bQ.
\]
Moreover, if $\beta_j>0$, then
$p_a(C_j)=0$ and $\beta_j(-C_j^2)\leq 2b_j\leq 2$.
In particular,
\[
\mathfrak{C}(N)\le \max_j\{2b_j\}\le 2.
\]
\end{proposition}

\begin{proof}
For any irreducible component $C$ of ${\rm Supp}(N)$, we have
\[
(B-N)\cdot C
= B\cdot C-(K_X+L)\cdot C
= -(K_X+A)\cdot C \le 0,
\]
where the last inequality follows from the nefness of $K_X+A$.
Therefore, Lemma~\ref{lem:neg-def} implies that $N\le B$.

Now assume that $\beta_j>0$. Then
\begin{align*}
0 
&= P\cdot C_j \\
&= (K_X+A)\cdot C_j + \sum_i (b_i-\beta_i) C_i\cdot C_j \\
&\geq b_j K_X\cdot C_j + (b_j-\beta_j) C_j^2 \\
&= b_j(2p_a(C_j)-2) + \beta_j(-C_j^2),
\end{align*}
where we used that $A$ and $K_X+A$ are nef and $C_i\cdot C_j\geq 0$ for $i\neq j$.
Thus,
\[
0<\beta_j(-C_j^2)\leq b_j(2-2p_a(C_j))\leq 2b_j,
\]
which implies $p_a(C_j)=0$ and $\beta_j(-C_j^2)\leq 2b_j\leq 2$.

The last statement follows from Lemma~\ref{lem:e_N_bound}.
\end{proof}

\subsection{The case of Hirzebruch--Jung chains}

Assume that $\Gamma_1+\cdots+\Gamma_r$ is a Hirzebruch--Jung chain 
and $N=\sum_{i=1}^r\gamma_i\Gamma_i$ is the effective $\bQ$-divisor
such that 
\[
N\cdot \Gamma_1=-1,\qquad N\cdot \Gamma_i=0 \quad \text{for } i\ge2.
\]

\begin{proposition}\label{prop:e_N(A)<=1}
Under the above assumptions, we have 
\[
\mathfrak{C}(N)\le1.
\]
\end{proposition}

\begin{proof}
Let $A$ be an $N$-nef divisor.

Choose $t \in \{0,\dots,r\}$ such that
\[
A\cdot \Gamma_i = 0 \text{ for } i \le t, \qquad A\cdot \Gamma_{t+1} > 0,
\]
with the convention that $t=0$ if $A\cdot \Gamma_1>0$ and $t=r$ if $A\cdot \Gamma_i=0$ for all $i$.

Let $N'=\sum_{i=1}^t \gamma'_i \Gamma_i$ be the effective divisor satisfying
\[
N'\cdot \Gamma_1=-1,\qquad N'\cdot \Gamma_i=0 \quad \text{for } i\ge2,
\]
and set $N'=0$ if $t=0$.

\medskip

\noindent\textbf{Claim.} $Z:=E_N(A)+N'-N\ge0$.

\medskip

By Lemma~\ref{lem:neg-def}, it suffices to check that $Z\cdot \Gamma_i\le0$ for all $i$.

If $t=r$, then $Z=0$.

If $t=0$, then $Z=E_N(A)-N$, and since $A\cdot \Gamma_1>0$, we have
\[
Z\cdot \Gamma_1 = -A\cdot \Gamma_1 +1 \le 0,
\qquad
Z\cdot \Gamma_i = -A\cdot \Gamma_i \le 0 \ (i\ge2).
\]

If $1\le t\le r-1$, then
\[
Z\cdot \Gamma_i=
\begin{cases}
0,& i\le t,\\
-A\cdot \Gamma_{t+1} + N'\cdot \Gamma_{t+1} \le -1 + \gamma'_t < 0,& i=t+1,\\
-A\cdot \Gamma_i \le 0,& i\ge t+2.
\end{cases}
\]

This proves the claim.

\medskip

Therefore,
\[
(E_N(A)-N)\cdot A
= (E_N(A)+N'-N)\cdot A \ge 0,
\]
and hence $\lambda_N(A)\le1$. Therefore, $\mathfrak{C}(N)\leq1$.
\end{proof}

\begin{remark}
In fact, using the notation in the above proof, we also obtain:
\[
\lambda_N(A)
\begin{cases}
\leq 1/A\cdot\Gamma_1,&\quad \text{for}\ t=0;\\
\leq \gamma'_t/A\cdot \Gamma_{t+1},&\quad \text{for}\ 1\leq t\leq r-1;\\
=0,&\quad \text{for}\ t=r.
\end{cases}
\]
Here $\gamma'_t=1/\det(-\Gamma_i\cdot\Gamma_j)_{1\leq i,j\leq t}\, (\leq1/2)$.
\end{remark}

\begin{remark}\label{remark:K_F-N}
A classical example of the above situation is given by foliations.

Let $D=K_{\cF}=P+N$, where $\cF$ is a relatively minimal foliation of general type on a smooth surface $X$.
By \cite[Theorem~8.1]{Bru15} and \cite[Proposition~III.2.1]{McQ08}, the negative part decomposes as 
$N=\sum_{i=1}^s N_i$, where the $N_i$ are pairwise disjoint and each $N_i$
is a Hirzebruch--Jung chain as above.
In particular, $\mathfrak{C}(N)\le 1$.

Moreover, if $D=mK_{\cF}=P_m+N_m$, then $\mathfrak{C}(N_m)=m\,\mathfrak{C}(N)\le m$ by Lemma~\ref{lem:lambda-scaling}.
\end{remark}

\section{Comparison between Zariski and linear system decompositions}\label{sec:P+NvsM+Z}

In this section, we analyze the discrepancy between the Zariski decomposition
and the linear system decomposition of a big divisor.
\medskip

Let $D$ be a big divisor on a smooth projective surface $X$ with $h^0(D)\ge 2$.
Write the Zariski decomposition
\[
D = P + N,
\]
and the linear system decomposition
\[
|D| = |M| + Z.
\]

The main difficulty in comparing the Zariski decomposition
\(
D=P+N
\)
with the linear system decomposition
\(
|D|=|M|+Z
\)
is that the movable part \(M\) is generally not orthogonal to the negative part \(N\),
whereas the positive part \(P\) satisfies
\(
P\cdot \Gamma=0
\)
for every component \(\Gamma\subset \Supp(N)\).

The idea of this section is to modify \(M\) and \(Z\)
by adding suitable divisors supported on \(N\),
so that the corrected divisors become orthogonal to \(N\).
This leads to a precise comparison between \(P\) and \(M\),
measured by the invariant \(\lambda_N(M)\) which has a uniform control via $\mathfrak{C}(N)$ by Section~\ref{sec:N-nef}.
\medskip

We decompose $Z = Z_1 + Z_2$, where $Z_2$ is supported on $N$.

\subsection{Correction divisors and orthogonality}

Recall that for any $N$-nef divisor $A$, we associate a unique effective $\bQ$-divisor $E_N(A)$ supported on $N=\sum_i\gamma_i\Gamma_i$
such that
\[
E_N(A)\cdot \Gamma_i = -A\cdot \Gamma_i \quad \text{for all } i.
\]

This construction allows us to modify divisors by adding components supported on $N$
so that they become numerically orthogonal to $N$.
\medskip

We define the corrected divisors
\[
M^* := M + E_N(M), \qquad Z^* := Z_1 + E_N(Z_1).
\]
By construction, both $M^*$ and $Z^*$ satisfy
\[
M^*\cdot \Gamma_i = Z^*\cdot \Gamma_i = 0 \quad \text{for all } i.
\]

\begin{lemma}[Correction structure]\label{lem:Zariski correction structure}
We have the numerical equivalence
\[
P \equiv M^* + Z^*.
\]

Moreover:
\begin{enumerate}
\item $M^* \ge M \ge 0$, and $M=M^*$ if and only if $M\cdot N=0$.

\item $Z^* \ge 0$, and $Z^*=0$ if and only if ${\rm Supp}(Z)={\rm Supp}(N)$.

\item We have the decomposition
\[
Z = Z^* + N + E_N(M).
\]
In particular,
\[
Z=N \;\Longleftrightarrow\; \big\{\,Z^*=0 \ \text{and}\ M^*=M\,\big\}\;\Longleftrightarrow\; \big\{\,Z^*=0 \ \text{and}\ M\cdot N=0\,\big\}.
\]
\end{enumerate}
\end{lemma}

\begin{proof}
Note that
\[
P-M^*-Z^*\equiv Z_2-N-E_N(M)-E_N(Z_1)=:G.
\]
Here $G$ is a $\mathbb{Q}$-divisor supported on $\mathrm{Supp}(N)$, and
\[
G \cdot \Gamma_i = (P-M^*-Z^*)\cdot \Gamma_i = 0 \quad \text{for all } i.
\]
Since the intersection matrix on $\mathrm{Supp}(N)$ is negative definite, it follows that $G=0$.

This implies $P \equiv M^* + Z^*$ and
\[
Z=Z_1+Z_2=Z^*+(Z_2-E_N(Z_1))=Z^*+N+E_N(M).
\]
So (3) holds. Finally, (1) and (2) follow directly from the definitions of $M^*$ and $Z^*$.
\end{proof}

\subsection{Quantifying the discrepancy via $\lambda_N(M)$}
We now show that the discrepancy between \(P\) and \(M\)
is controlled by the quantity \(\lambda_N(M)\) introduced in Definition~\ref{def:lambda_N(A)}.
More precisely, we define
\[
\lambda_N(M):=\min \left\{ x \ge 0 \,\middle|\, (xE_N(M)-N)\cdot M=0 \right\}.
\]

The quantity \(\lambda_N(M)\) measures the interaction between the movable part \(M\)
and the negative part \(N\).
In particular, $\lambda_N(M)=0$ if and only if $M\cdot N=0$.

\begin{proposition}[Main identity]\label{prop:Mainidentity}
\begin{equation}\label{eq:main-identity}
P^2=M^2+\frac{1}{1+\lambda_N(M)}M\cdot Z+\frac{\lambda_N(M)}{1+\lambda_N(M)}M\cdot Z^*+P\cdot Z.
\end{equation}
\end{proposition}

\begin{proof}
Since
\[
 P\cdot M = D\cdot M - N\cdot M,
\qquad 
(M^*)^2 = M\cdot M^* = M^2 + E_N(M)\cdot M,
\]
we obtain
\begin{align*}
P\cdot M + \lambda_N(M)(M^*)^2
&= D\cdot M + \lambda_N(M)M^2 
+ \bigl(\lambda_N(M)E_N(M) - N\bigr)\cdot M\\
&= (1+\lambda_N(M))M^2+M\cdot Z,
\end{align*}
where we used the definition of $\lambda_N(M)$.

On the other hand,
\[
P^2 = P\cdot M + P\cdot Z,
\qquad
P^2 = (M^* + Z^*)^2 = (M^*)^2 + P\cdot Z + M\cdot Z^*.
\]
Combining these identities gives the result.
\end{proof}

The next proposition analyzes the vanishing of the correction terms
appearing in \eqref{eq:main-identity} and characterizes the corresponding rigidity cases.

\begin{proposition}\label{prop:Vanishing-and-rigidity}
Under the above notation, we have:
\begin{enumerate}
\item The following are equivalent:
\[
{\rm Supp}(Z)={\rm Supp}(N)
\;\Longleftrightarrow\;
Z^*=0
\;\Longleftrightarrow\;
M\cdot Z^*=0.
\]
In this case,
\[
P^2= M^2+\frac{1}{1+\lambda_N(M)}M\cdot Z.
\]

\item The following are equivalent:
\[
Z=N \;\Longleftrightarrow\; P^2 = M^2 \;\Longleftrightarrow\; M\cdot Z=0.
\]
\end{enumerate}
\end{proposition}

\begin{proof}
(1) The equivalence $Z^* = 0 \Longleftrightarrow {\rm Supp}(Z)={\rm Supp}(N)$ follows from Lemma~\ref{lem:Zariski correction structure}. We only prove $M\cdot Z^* = 0 \Longleftrightarrow Z^*=0$.
The implication $Z^*=0 \Rightarrow M\cdot Z^*=0$ is obvious.

Conversely, assume $M\cdot Z^*=0$. 
By the Hodge index theorem, $M^2\geq0$ implies $(Z^*)^2\leq 0$.
On the other hand, 
\[
(Z^*)^2=(P-M^*)\cdot Z^*=P\cdot Z^*-M\cdot Z^*=P\cdot Z^*\geq0.
\]
Hence $(Z^*)^2=P\cdot Z^*=0$. Since $P^2>0$, this implies $Z^*\equiv0$ by the Hodge index theorem again.
This implies $Z^*=0$ since $Z^*$ is effective.

\medskip

(2) If $Z=N$, then $P=M$ and $P^2=M^2$.
If $P^2=M^2$, Proposition~\ref{prop:Mainidentity} implies $M\cdot Z=0$.
Next it suffices to prove the implication $M\cdot Z=0 \Rightarrow Z=N$.

Assume $M\cdot Z=0$. Since $M$ is nef and 
\[
Z = N + Z^* + E_N(M)
\]
(cf.~Lemma~\ref{lem:Zariski correction structure}(3)),
$M\cdot Z=0$ implies
\[
M\cdot N = M\cdot Z^* = M\cdot E_N(M) = 0.
\]
In particular, $M\cdot N=0$ implies $E_N(M)=0$ and $M\cdot Z^* = 0$ implies $Z^*=0$ by (1).
Therefore, $Z=N$.
\end{proof}

\subsection{Uniform bounds via $\mathfrak{C}(N)$}

Recall that $\mathfrak{C}(N)$ is a numerical invariant depending only on $N$ (cf.~Definition~\ref{def:c(N)})
such that
\[
\lambda_N(M)\le \mathfrak{C}(N).
\]

Moreover, if $M \equiv nF$ for some divisor $F$ and $n\in\mathbb{Z}_{>0}$, then by Lemma~\ref{lem:lambda-scaling} we have
\[
\lambda_N(M)\le \frac{\mathfrak{C}(N)}{n}.
\]

In particular, the correction term $\lambda_N(M)$ is uniformly controlled by $N$
and decays linearly with respect to $n$.

\begin{remark}
When $n$ is large, the terms involving $\lambda_N(M)$ become negligible.
\end{remark}

As immediate consequences of Proposition~\ref{prop:Mainidentity} and \ref{prop:Vanishing-and-rigidity}, we have the following inequalities.
\begin{proposition}[Basic inequality]\label{prop:basic-ineq}
\begin{equation}\label{ineq:1st-ineq-of-P^2}
P^2 \ge M^2,
\end{equation}
with equality if and only if   $Z=N$.
\end{proposition}
\begin{proof}
It follows from Proposition~\ref{prop:Mainidentity} and Proposition~\ref{prop:Vanishing-and-rigidity}(2).
\end{proof}

\begin{proposition}[Refined inequality]\label{prop:refined-ineq}
Assume $M \equiv nF$ for some divisor $F$ and $n\in\bZ_{>0}$.
If $P^2> M^2$, then $F\cdot Z >0$ and
\begin{equation}\label{ineq:2nd-ineq-of-P^2}
P^2 \ge 
M^2 + \frac{n^2}{n+\mathfrak{C}(N)}\, F\cdot Z\geq M^2 + \frac{n^2}{n+\mathfrak{C}(N)},
\end{equation}
where the first inequality becomes an equality only if
${\rm Supp}(Z)={\rm Supp}(N)$.
\end{proposition}

\begin{proof}
Since $M \equiv nF$, we have
\[
M\cdot Z = n\, F\cdot Z.
\]
Moreover, by Lemma~\ref{lem:lambda-scaling}, we obtain
\[
\lambda_N(M)\le \frac{\mathfrak{C}(N)}{n}.
\]

Applying Proposition~\ref{prop:Mainidentity}, we deduce
\[
P^2 \ge 
M^2 + \frac{n}{n+\mathfrak{C}(N)}\, M\cdot Z
= M^2 + \frac{n^2}{n+\mathfrak{C}(N)}\, F\cdot Z.
\]
The first equality holds only if
${\rm Supp}(Z)={\rm Supp}(N)$, by Proposition~\ref{prop:Vanishing-and-rigidity}(1).
This yields the first inequality in \eqref{ineq:2nd-ineq-of-P^2}.

Next, assume $P^2 > M^2$. Then Proposition~\ref{prop:Vanishing-and-rigidity}(2)
implies that $M\cdot Z>0$, hence $F\cdot Z\ge 1$.
Substituting this into the previous inequality gives the second inequality in \eqref{ineq:2nd-ineq-of-P^2}.
\end{proof}

\section{Proof of the main Noether-type inequalities}\label{sec:proof}

In this section, we combine the inequalities established in
Section~\ref{sec:P+NvsM+Z} with the geometry of the movable linear system $|M|$
to establish the main Noether-type inequalities.

\medskip

Let $D$ be a big divisor on a smooth projective surface $X$ with $h^0(D)\geq2$.
Let $D=P+N$ be the Zariski decomposition of $D$, where $N$ denotes the negative part.
Let 
\[
|D| = |M| + Z
\]
be the linear system decomposition, where $M$ denotes the movable part.
The movable linear system $|M|$ defines a rational map
\[
\phi=\phi_{|D|}=\phi_{|M|}:X\dashrightarrow \bP^d,\qquad d=h^0(M)-1,
\]
where $h^0(M)=h^0(D)\geq2$.

To study the geometry of $|M|$, we resolve the indeterminacy of 
the rational map $\phi$ and consider the following diagram:
\begin{equation}\label{diag:main}
\xymatrix{
&&\widetilde{X}\ar_-{f}[lld]\ar^-{\psi}[d]\ar^-{\sigma}[rrr]&&&X\ar@{-->}^{\phi}[d]&\\
Y\ar^-{\pi}[rr]&&W\ar^-{\rho}_-{\rm desingularization}[rrr]&&&\Sigma\ar@{^{(}->}[r]&\bP^d
}
\end{equation}
where $\Sigma=\overline{{\rm Im}(\phi)}$.

Here $\sigma:\widetilde{X}\to X$ resolves the base points of $|M|$, and
$\rho:W\to\Sigma$ is the minimal desingularization. The morphism
$\rho\circ\psi:\widetilde{X}\to\Sigma\subset\bP^d$ is induced by the complete linear system $|\widetilde{M}|$, where
\[
\widetilde{M}=\sigma^*M-\sum_j a_j\mathcal{E}_j,\qquad a_j\ge1,
\]
and $\widetilde{M}\sim(\rho\circ\psi)^*H$ for any hyperplane section $H$ of $\Sigma$.
Moreover, $f:\widetilde{X}\to Y$  is obtained by the Stein factorization of $\psi$, satisfying $f_*\cO_{\widetilde{X}}=\cO_Y$.

\medskip

We distinguish two cases according to $\dim\Sigma$.

\subsection{The case $\dim\Sigma=1$}
We first consider the case $\dim\Sigma=1$.
The argument splits into two cases according to whether $M^2>0$ or $M^2=0$.

\begin{proposition}\label{prop:dim1}
Assume that $\dim\Sigma=1$ and $M^2>0$. Then we have
\begin{equation}\label{ineq:dim1-main}
{\rm vol}(D)\ge (h^0(D)-1)^2,
\end{equation}
with equality only if $Z=N$ and $Y\cong\bP^1$.

Moreover, if equality does not hold, then
\begin{equation}\label{ineq:dim1-refined}
{\rm vol}(D)\ge (h^0(D)-1)^2+\frac{(h^0(D)-1)^2}{h^0(D)-1+\mathfrak{C}(N)},
\end{equation}
with equality only if ${\rm Supp}(Z)={\rm Supp}(N)$ and $Y\cong\bP^1$.
\end{proposition}

\begin{proof}
In this case, $f:\widetilde{X}\to Y$ is a fibration onto a smooth curve.
Let $H$ denote a hyperplane section of $\Sigma$ and let $L:=(\rho\circ \pi)^*(H)$. Then
\[
\widetilde{M}\sim(\rho\circ\psi)^*(H)=(\rho\circ\pi\circ f)^*(H)=f^*(L).
\]
Since $f_*\cO_{\widetilde{X}}=\cO_Y$, we have $f_*\widetilde{M}=L$ and hence
\[
h^0(Y,L)=h^0(\widetilde{X},\widetilde{M})=h^0(X,M).
\]
By Lemma~\ref{lem:degD>=h0(D)-1-curve}, we have 
$n:=\deg L\geq h^0(M)-1$, with equality if and only if $Y\cong\bP^1$.

Let $F$ be a general fibre of $f$. Then
$\widetilde{M}\equiv nF$.
Let $A=\sigma_*F$. Then 
\[
M\equiv nA.
\]
Since $M^2>0$, we have $A^2>0$. Hence
\begin{equation}\label{ineq:M^2>=}
M^2=n^2A^2\geq n^2\ge (h^0(M)-1)^2.
\end{equation}

Combining this with Proposition~\ref{prop:basic-ineq}, we obtain
\[
P^2\ge M^2\ge (h^0(M)-1)^2,
\]
which proves \eqref{ineq:dim1-main}.
Moreover, equality $P^2=(h^0(M)-1)^2$ holds if and only if
\[
P^2=M^2
\qquad\text{and}\qquad
M^2=(h^0(M)-1)^2.
\]
By Proposition~\ref{prop:Vanishing-and-rigidity}, the first condition is equivalent to $Z=N$.
The second condition forces
$n=h^0(M)-1$,
and hence $Y\cong\bP^1$.
This proves the equality characterization.

\medskip

Now assume that equality in \eqref{ineq:dim1-main} does not hold. Then 
\[
P^2>M^2\quad \text{or}\quad M^2>(h^0(M)-1)^2.
\]
If $P^2>M^2$, then Proposition~\ref{prop:refined-ineq} gives
\[
P^2
\ge
M^2+\frac{n^2}{n+\mathfrak{C}(N)}\ge
(h^0(M)-1)^2
+
\frac{(h^0(M)-1)^2}{h^0(M)-1+\mathfrak{C}(N)}.
\]
If $M^2>(h^0(M)-1)^2$, then 
\[
P^2\geq M^2\geq (h^0(M)-1)^2+2(h^0(M)-1)+1,
\]
where the last inequality follows from \eqref{ineq:M^2>=}.

As 
\[
2(h^0(M)-1)+1-\frac{(h^0(M)-1)^2}{h^0(M)-1+\mathfrak{C}(N)}\geq h^0(M)>0,
\]
we obtain
\[
P^2\ge  (h^0(M)-1)^2+\frac{(h^0(M)-1)^2}{h^0(M)-1+\mathfrak{C}(N)}.
\]
This proves \eqref{ineq:dim1-refined}.
Moreover, if equality holds, then ${\rm Supp}(Z)={\rm Supp}(N)$ and $Y\cong\bP^1$,
by Proposition~\ref{prop:refined-ineq} and the above discussion.
\end{proof}

\medskip

We now consider the case where $M^2=0$.

\begin{proposition}\label{prop:dim1-bpf}
Assume that $\dim\Sigma=1$ and $M^2=0$. Then $X=\widetilde{X}$ and
\begin{equation}\label{ineq:dim1-bpf}
{\rm vol}(D)\ge 
\frac{(h^0(D)-1)^2}{h^0(D)-1+\mathfrak{C}(N)}\, D\cdot F
\ge 
\frac{(h^0(D)-1)^2}{h^0(D)-1+\mathfrak{C}(N)}.
\end{equation}
Moreover, the first equality in \eqref{ineq:dim1-bpf} holds  only if 
${\rm Supp}(Z)={\rm Supp}(N)$ and $Y\cong\bP^1$.
\end{proposition}

\begin{proof}
In this case, $|M|$ is base-point-free. 
Then $X=\widetilde{X}$ and $M=\widetilde{M}\equiv nF$, where $n$ is defined in the proof of Proposition~\ref{prop:dim1}. 
Moreover, $n\ge h^0(M)-1$ with equality iff $Y\cong\bP^1$, by Lemma~\ref{lem:degD>=h0(D)-1-curve}.

Since $M^2=0$ and $P^2>0$, by Proposition~\ref{prop:refined-ineq}, we obtain $D\cdot F=F\cdot Z\geq1$ and
\[
P^2\geq \frac{n^2}{n+\mathfrak{C}(N)}F\cdot Z\geq \frac{(h^0(M)-1)^2}{h^0(M)-1+\mathfrak{C}(N)}D\cdot F,
\]
This gives the first inequality in \eqref{ineq:dim1-bpf}, and equality holds only if 
${\rm Supp}(Z)={\rm Supp}(N)$ and $Y\cong\bP^1$.
The second inequality in \eqref{ineq:dim1-bpf} 
follows from the fact that $D\cdot F\geq1$.
\end{proof}

\begin{proof}[Proof of Theorem~\ref{thm:main}(1)]
It follows from Proposition~\ref{prop:dim1} and Proposition~\ref{prop:dim1-bpf}.
\end{proof}
\subsection{The case $\dim\Sigma=2$}
We derive lower bounds for $M^2$ from the geometry of the image surface $\Sigma$.
These will then be combined with the comparison results in Section~\ref{sec:P+NvsM+Z}.

\begin{lemma}\label{lem:M^2-dimS=2}
Assume that $\dim\Sigma=2$.
\begin{enumerate}
\item We have
\[
M^2\ge h^0(M)-2,
\]
with equality if and only if $\phi$ is a birational morphism onto its image and $\deg\Sigma=d-1$.

\item If moreover $\kappa(X)\ge0$, then
\[
M^2\ge 2h^0(M)-4,
\]
with equality if and only if one of the following holds:
\begin{itemize}
    \item $\phi$ is a birational morphism onto its image and $\deg\Sigma=2d-2$;
    
    \item $\phi$ is a finite morphism of degree $2$ onto its image and $\deg\Sigma=d-1$.
\end{itemize}
\end{enumerate}
\end{lemma}

\begin{proof}
Since $\dim\Sigma=2$, we have
\[
M^2\ge \widetilde{M}^2
=
((\rho\circ\psi)^*H)^2
=
\deg(\rho\circ\psi)\cdot H^2
=
\deg\pi\cdot\deg\Sigma.
\]

By Proposition~\ref{Prop:Bea79Lem1.4}(1),
\[
\deg\Sigma\ge d-1=h^0(M)-2.
\]
Hence
\[
M^2\ge h^0(M)-2,
\]
with equality if and only if
\[
M^2=\widetilde{M}^2,
\qquad
\deg\pi=1,
\qquad
\deg\Sigma=d-1.
\]
The equality $M^2=\widetilde{M}^2$ holds if and only if $|M|$ is base point free, 
while $\deg\pi=1$ is equivalent to $\phi$ being birational onto its image.
Thus equality holds if and only if $\phi$ is a birational morphism onto its image and $\deg\Sigma=d-1$.
This proves~(1).

\medskip

Now assume moreover that $\kappa(X)\ge0$.

If $\deg\pi=1$, then $\phi$ is birational onto its image, and hence
$\kappa(\Sigma)=\kappa(X)\ge0$.
By Proposition~\ref{Prop:Bea79Lem1.4}(2), we have
\[
\deg\Sigma\ge 2d-2=2h^0(M)-4.
\]
Therefore
\[
M^2\ge 2h^0(M)-4,
\]
with equality if and only if $\phi$ is a birational morphism onto its image and $\deg\Sigma=2d-2$.

We now consider the case $\deg\pi\ge2$.
 Then
\[
M^2\ge\widetilde{M}^2=\deg\pi\cdot\deg\Sigma
\ge 2\deg\Sigma
\ge 2h^0(M)-4.
\]
Equality $M^2=2h^0(M)-4$ holds if and only if
\[
M^2=\widetilde{M}^2,
\qquad\deg\pi=2,
\qquad
\deg\Sigma=d-1.
\]
Equivalently, $\phi$ is a finite morphism of degree $2$ onto its image and $\deg\Sigma=d-1$.
This proves~(2).
\end{proof}

\begin{proof}[Proof of Theorem~\ref{thm:main}(2)(3)]
We first prove Theorem~\ref{thm:main}(2).

Combining Lemma~\ref{lem:M^2-dimS=2}(1) with Proposition~\ref{prop:basic-ineq}, we obtain
\[
P^2\ge M^2\ge h^0(M)-2,
\]
which yields \eqref{ineq:P^2-intro-dim2-general-1st}.
Moreover, equality
$P^2=h^0(M)-2$
holds if and only if
\[
P^2=M^2
\qquad\text{and}\qquad
M^2=h^0(M)-2.
\]
By Proposition~\ref{prop:Vanishing-and-rigidity}(2), the first condition is equivalent to
$Z=N$.
By Lemma~\ref{lem:M^2-dimS=2}(1), the second condition is equivalent to
$\phi$ being a birational morphism onto its image and
$\deg\Sigma=d-1$.
Finally, Proposition~\ref{prop:degS=n-1} shows that $\Sigma$ is a normal rational surface.
This proves the equality characterization in Theorem~\ref{thm:main}(2).

\medskip

Now assume that $P^2>h^0(M)-2$. 
Then 
\[
P^2>M^2\qquad \text{or}\qquad M^2\geq h^0(M)-1.
\]
If $M^2\geq h^0(M)-1$, then 
\[
P^2\geq M^2\geq h^0(M)-2+1.
\]
If $P^2>M^2$, then Proposition~\ref{prop:refined-ineq} yields
\[
P^2
\ge
M^2+\frac{1}{1+\mathfrak{C}(N)}
\ge
h^0(M)-2+\frac{1}{1+\mathfrak{C}(N)}.
\]
Thus,
\[
P^2\geq h^0(M)-2+\min\left\{\,1,\,\frac{1}{1+\mathfrak{C}(N)}\,\right\}=h^0(M)-2+\frac{1}{1+\mathfrak{C}(N)},
\]
which proves \eqref{ineq:P^2-intro-dim2-general-2nd}.
\bigskip

We now turn to the proof of Theorem~\ref{thm:main}(3).

Assume moreover that $\kappa(X)\ge0$.
Combining Lemma~\ref{lem:M^2-dimS=2}(2) and Proposition~\ref{prop:basic-ineq}, we obtain
\[
P^2\ge M^2\ge 2h^0(M)-4,
\]
which proves \eqref{ineq:P^2-intro-dim2-not-rule-1st}.
Moreover, equality $P^2=2h^0(M)-4$
holds if and only if
\[
P^2=M^2
\qquad\text{and}\qquad
M^2=2h^0(M)-4.
\]
By Proposition~\ref{prop:Vanishing-and-rigidity}(2), the first condition is equivalent to
$Z=N$.
By Lemma~\ref{lem:M^2-dimS=2}(2), the second condition is equivalent to one of the following cases:
\begin{itemize}
\item $\phi$ is a birational morphism onto its image and $\deg\Sigma=2d-2$;

\item $\phi$ is a finite morphism of degree $2$ onto its image and $\deg\Sigma=d-1$.
\end{itemize}
In the first case, Proposition~\ref{prop:degS=2d-2} implies that
$\Sigma$ is birational to a K3 surface.
In the second case, Proposition~\ref{prop:degS=n-1} implies that
$\Sigma$ is a normal rational surface.
This proves the equality characterization in
Theorem~\ref{thm:main}(3).

\medskip

Finally, if $P^2>2h^0(M)-4$, then
\[
P^2\geq 2h^0(M)-4+\min\left\{\,1,\,\frac{1}{1+\mathfrak{C}(N)}\,\right\}
=
2h^0(M)-4+\frac{1}{1+\mathfrak{C}(N)},
\]
where the argument is identical to that used in the proof of
\eqref{ineq:P^2-intro-dim2-general-2nd}.
This proves \eqref{ineq:P^2-intro-dim2-not-rule-2nd}.
\end{proof}

\subsection{Proof of Corollary~\ref{coro:vol-ps-index}}
\begin{proof}
Let $m=\iota(D)$, so that $h^0(mD)\ge 2$.
Write the Zariski decomposition
\[
mD = P' + N',
\]
so that $N' = mN$ and $\mathfrak{C}(N') = m\mathfrak{C}(N)$ by Lemma~\ref{lem:lambda-scaling}.

If $\dim {\rm Im}\phi_{|mD|} = 1$, then by Theorem~\ref{thm:main}(1) we have 
\[
\mathrm{vol}(mD)
\ge
\frac{(h^0(mD)-1)^2}{h^0(mD)-1+\mathfrak{C}(N')}
\ge
\frac{1}{1+m\mathfrak{C}(N)}.
\]

If $\dim {\rm Im}\phi_{|mD|} = 2$, then $h^0(mD)\ge 3$, and by Theorem~\ref{thm:main}(2) we obtain
\[
\mathrm{vol}(mD) \ge h^0(mD)-2 \ge 1.
\]

In both cases,
\[
\mathrm{vol}(mD) \ge \frac{1}{1+m\mathfrak{C}(N)}.
\]
Since $\mathrm{vol}(mD) = m^2 \mathrm{vol}(D)$, we conclude that
\[
\mathrm{vol}(D)
\ge
\frac{1}{m^2}\cdot \frac{1}{1+m\mathfrak{C}(N)},
\]
which proves the claim.
\end{proof}

\bigskip

\subsection*{Acknowledgements}
The author sincerely thanks Professors Shengli Tan, Jun Lu, Xiaohang Wu, and Xin L\"u for their helpful discussions and valuable suggestions.  
He also thanks the referees for carefully reading the manuscript, pointing out errors, and providing detailed comments that significantly improved the paper.



\begin{thebibliography}{99}

\bibitem{Bea79}
A.~Beauville,
\newblock \emph{L'application canonique pour les surfaces de type g\'en\'eral},
\newblock Invent. Math. \textbf{55} (1979), no.~2, 121--140.

\bibitem{Bru15}
M.~Brunella,
\newblock \emph{Birational geometry of foliations},
\newblock IMPA Monographs, vol.~1, Springer, Cham, 2015.

\bibitem{Cas21}
P.~Cascini,
\newblock \emph{New directions in the minimal model program},
\newblock Boll. Unione Mat. Ital. \textbf{14} (2021), no.~1, 179--190.

\bibitem{CC14}
J.~A.~Chen and M.~Chen,
\newblock \emph{Explicit birational geometry of $3$-folds and $4$-folds of general type, III},
\newblock Compos. Math. \textbf{151} (2015), no.~6, 1041--1082.

\bibitem{Che23}
J.~Chen,
\newblock \emph{The minimal and next minimal volumes of normal KSBA stable surfaces with $p_g\geq2$},
\newblock preprint, arXiv:2308.01473.

\bibitem{Har77}
R.~Hartshorne,
\newblock \emph{Algebraic geometry},
\newblock Graduate Texts in Mathematics, vol.~52, Springer-Verlag, New York, 1977.

\bibitem{Bad01}
L.~B\u{a}descu,
\newblock \emph{Algebraic surfaces},
\newblock Universitext, Springer-Verlag, New York, 2001.

\bibitem{Lu25}
X.~L\"u,
\newblock \emph{Unboundedness of foliated varieties},
\newblock Internat. J. Math. \textbf{36} (2025), no.~6, Paper No.~2550003, 9~pp.

\bibitem{LT24}
X.~L\"u and S.-L.~Tan,
\newblock \emph{The Poincar\'e problem for a foliated surface},
\newblock preprint, arXiv:2404.16293.

\bibitem{McQ08}
M.~McQuillan,
\newblock \emph{Canonical models of foliations},
\newblock Pure Appl. Math. Q. \textbf{4} (2008), no.~3, 877--1012.

\bibitem{Nag60}
M.~Nagata,
\newblock \emph{On rational surfaces. I. Irreducible curves of arithmetic genus $0$ or $1$},
\newblock Mem. Coll. Sci. Univ. Kyoto Ser. A Math. \textbf{32} (1960), 351--370.

\bibitem{Noe70}
M. Noether. 
\emph{Zur Theorie des eindeutigen Entsprechens algebraischer Gebilde von beliebig vielen Di-mensionen}. 
Math. Ann., 2(2): 293-316, 1870.



\bibitem{Shin08}
D.~K.~Shin,
\newblock \emph{Noether inequality for a nef and big divisor on a surface},
\newblock Commun. Korean Math. Soc. \textbf{23} (2008), 11--18.

\bibitem{TZ92}
S.~Tsunoda and D.-Q.~Zhang,
\newblock \emph{Noether's inequality for non-complete algebraic surfaces of general type},
\newblock Publ. Res. Inst. Math. Sci. \textbf{28} (1992), 21--38.

\bibitem{Zar62}
O.~Zariski,
\newblock \emph{The theorem of Riemann--Roch for higher multiples of an effective divisor on an algebraic surface},
\newblock Ann. of Math. (2) \textbf{76} (1962), 560--616.

\end{thebibliography}
\end{document}